\newcommand{\fullname}{Adam D.\ Bull}
\newcommand{\thetitle}{A Smirnov-Bickel-Rosenblatt theorem for 
compactly-supported wavelets}
\newcommand{\mscone}{62G20}
\newcommand{\msctwo}{62G07}
\newcommand{\mscthree}{62G08}
\newcommand{\mscfour}{62G15}
\newcommand{\mscfive}{65T60}
\newcommand{\themsclass}{\mscone\ (Primary); \msctwo, \mscthree, \mscfour, 
\mscfive\ (Secondary)}
\newcommand{\kwdone}{nonparametric statistics}
\newcommand{\kwdtwo}{compactly-supported wavelets}
\newcommand{\kwdthree}{asymptotic distribution}
\newcommand{\kwdfour}{confidence sets}
\newcommand{\kwdfive}{supremum norm}
\newcommand{\thekeywords}{\kwdone, \kwdtwo, \kwdthree, \kwdfour, \kwdfive}
\newcommand{\addressone}{Statistical Laboratory}
\newcommand{\addresstwo}{University of Cambridge}
\newcommand{\theemail}{a.bull@statslab.cam.ac.uk}
\newcommand{\theabstract}{
In nonparametric statistical problems, we wish to find an estimator of an 
unknown function \(f.\) We can split its error into bias and variance terms; 
Smirnov, Bickel and Rosenblatt have shown that, for a histogram or kernel 
estimate, the supremum norm of the variance term is asymptotically distributed 
as a Gumbel random variable. In the following, we prove a version of this 
result for estimators using compactly-supported wavelets, a popular tool in 
nonparametric statistics.
Our result relies on an assumption on the nature of the wavelet, which must be 
verified by provably-good numerical approximations. We verify our assumption 
for Daubechies wavelets and symlets, with \(N = 6, \dots, 20\) vanishing 
moments; larger values of \(N,\) and other wavelet bases, are easily checked, 
and we conjecture that our assumption holds also in those cases.
}
\let\oldmarginpar\marginpar
\renewcommand\marginpar[1]{\-\oldmarginpar[\raggedleft\footnotesize 
#1]{\raggedright\footnotesize #1}}
\DeclarePairedDelimiter{\abs}{\lvert}{\rvert}
\DeclarePairedDelimiter{\norm}{\lVert}{\rVert}
\newcommand{\N}{\mathbb{N}}
\newcommand{\Z}{\mathbb{Z}}
\newcommand{\R}{\mathbb{R}}
\renewcommand{\P}{\mathbb{P}}
\newcommand{\E}{\mathbb{E}}
\newcommand{\Var}{\mathbb{V}\mathrm{ar}}
\newcommand{\Cov}{\mathbb{C}\mathrm{ov}}
\newcommand{\iid}{\overset{\mathrm{i.i.d.}}{\sim}}
\numberwithin{equation}{section}
\declaretheorem[numberwithin=section]{theorem}
\declaretheorem[sibling=theorem]{lemma}
\declaretheorem[sibling=theorem]{assumption}
\newcommand{\fh}{\hat f}
\newcommand{\fb}{\bar f}
\newcommand{\sfb}{\overline \sigma_\varphi}
\begin{document}

\title{\thetitle
\footnotetext{\emph{Mathematics subject classification 2010.} \themsclass}
\footnotetext{\emph{Keywords.} \thekeywords}}
\author{\fullname\\\footnotesize \addressone\\\footnotesize \addresstwo\\
\footnotesize \theemail}
\date{}

\maketitle

\begin{abstract}
  \theabstract
\end{abstract}

\section{Introduction}
\label{sec:introduction}

In nonparametric statistical problems, such as density estimation, regression, 
or white noise, we wish to find an estimate \(\hat f\) of an unknown function 
\(f\) \citep{tsybakov_introduction_2009}. We can measure the accuracy of an 
estimator \(\hat f\) by its distance from \(f,\)
\(\norm{\hat f - f},\)
where \(\norm{\,\cdot\,}\) is some norm on functions. We can then decompose 
the error into variance and bias terms,
\[\norm{\hat f - f} \le \norm{\hat f - \E \hat f} + \norm{\E \hat f - f},\]
where the bias term \(\norm{\E \hat f - f}\) is deterministic, and the 
variance term \(\norm{\hat f - \E \hat f}\) we hope has an asymptotic 
distribution independent of \(f.\)

In density estimation, for the supremum norm on \([0,1],\)
\[\norm{f}_\infty \coloneqq \sup_{x \in [0, 1]} \abs{f(x)},\]
the limiting distribution of a suitably scaled variance term is given by 
\citet{smirnov_construction_1950} for histograms, and in the classical paper 
of \citet{bickel_global_1973} for kernel estimates. In both cases, as the 
sample size \(n\) tends to infinity, the variance term approaches a Gumbel 
distribution,
\[\P\left(A_n\left(\norm*{\frac{\hat f_n - \E \hat f_n}{\sqrt{f}}}_\infty - 
B_n\right) \le x\right) \to e^{-e^{-x}},\]
for known sequences \(A_n,\) \(B_n.\) This result has been of key importance 
for a variety of problems in nonparametric statistics.

Wavelets are an increasingly popular statistical tool, allowing a simple 
theoretical description of nonparametric problems, and a computationally 
efficient implementation of their solution.  \citet{gine_confidence_2010} 
establish an equivalent of these Smirnov-Bickel-Rosenblatt theorems for 
certain wavelet estimators, using a result of 
\citet*{husler_convergence_2003} on the convergence of cyclostationary 
Gaussian processes.  \citeauthor{gine_confidence_2010} describe the asymptotic 
distribution of the supremum, on increasing intervals, of the Gaussian process
\[X(x) \coloneqq \int K(x, t)\,dB_t,\]
where \(K\) is a wavelet projection kernel, and \(B\) a Brownian motion;
they then link this result to the statistical problem considered above.

Their result holds only for wavelets satisfying certain analytic conditions, 
which the authors demonstrate are satisfied by Battle-Lemari\'{e} wavelets 
having \(N \le 4\) vanishing moments; \citet*{gine_periodized_2011} extend 
this to larger values of \(N.\) Past work has not, however, succeeded in 
establishing results for the most commonly used wavelets, such as Daubechies 
wavelets and symlets. These wavelets, unlike those of Battle and Lemari\'{e}, 
are compactly supported, allowing the most efficient implementation of 
statistical procedures. In the following, we demonstrate that the conditions 
of \citet{gine_confidence_2010} hold also in these cases, thereby proving a 
Smirnov-Bickel-Rosenblatt theorem for the most practically relevant wavelet 
bases.

We work primarily in the white noise model, but also discuss consequences for 
the density estimation and regression models. We consider wavelet bases both 
on \(\R,\) and also on the interval, using the construction of 
\citet{cohen_wavelets_1993}.  In both cases, we show that the variance term 
again approaches a Gumbel distribution. We also extend a theorem of 
\citet{husler_convergence_2003} (as reported in 
\citealp{husler_extremes_1999}), establishing a uniform convergene result for 
cyclostationary processes; this allows us to show that convergence to Gumbel 
occurs uniformly in large values of the level \(x.\) These results are used in 
\citet{bull_honest_2011} to construct adaptive confidence bands for 
nonparametric statistical problems, and are also of relevance to many other 
wavelet procedures.

To prove our results, we must first verify an assumption on the wavelet 
functions, which in general do not have an analytic form. We therefore make 
use of provably-accurate numerical approximations, given by 
\citet{rioul_simple_1992}; these approximations also provide an efficient means 
of computing the constants in our results.  We verify our assumption for 
Daubechies wavelets and symlets, having \(N= 6, \dots, 20\) vanishing moments 
\citep[\S6.4]{daubechies_ten_1992}; however, the numerical approximations can 
easily be applied to larger values of \(N,\) and other wavelet bases, and we 
conjecture that our assumption holds also in these cases.

We state our result in \autoref{sec:results}, and describe the necessary 
numerical approximations in \autoref{sec:numerics}. We give proofs in 
\autoref{app:proofs}, and source code in \autoref{app:source}.

\section{Results}
\label{sec:results}

To begin, we will need \(\varphi\) and \(\psi,\) the scaling function and 
wavelet of an orthonormal multiresolution analysis on \(L^2(\R).\) (For an 
introduction to wavelets and their statistical applications, see 
\citealp{hardle_wavelets_1998}.) We make the following assumptions on 
\(\varphi\) and \(\psi,\) which are satisfied, for example, by Daubechies 
wavelets and symlets, with \(N \ge 6\) vanishing moments 
(\citealp[\S6.1]{daubechies_ten_1992}; \citealp[\S14]{rioul_simple_1992}).

\begin{assumption}\ \label{ass:wavelet-basis}
\begin{enumerate}
\item For \(K \in \N,\) \(\varphi\) and \(\psi\) are supported on the interval 
  \([1-K, K].\)
\item For \(N \in \N,\) \(\psi\) has \(N\) vanishing moments:
\[\int_\R x^i \psi(x) \, dx = 0, \qquad i=0,\dots,N-1.\]
\item \(\varphi\) is twice continuously differentiable.
\end{enumerate}
\end{assumption}

We will consider wavelet bases on both \(\R\) and \([0, 1],\) constructed from 
\(\varphi\) and \(\psi.\) On \(\R,\) we have an orthonormal basis of 
\(L^2(\R)\) given by
\begin{equation}
\label{eq:wavelets}
\varphi_{j_0, k}(x) \coloneqq 2^{j_0/2}\varphi(2^{j_0}x - k), \qquad \psi_{j, 
k} \coloneqq 2^{j/2}\psi(2^jx-k),
\end{equation}
for some lower resolution level \(j_0 \in \Z,\) \(j > j_0,\) and \(k \in \Z.\)

On \([0, 1],\) we can generate an orthonormal basis of \(L^2([0, 1])\) using 
the construction of \citet{cohen_wavelets_1993} (see also 
\citealp{chyzak_construction_2001}). We obtain basis functions 
\[\varphi_{j_0,k}, \qquad k = 0, \dots, 2^{j_0}-1,\]
and
\[\psi_{j,k}, \qquad j > j_0, \, k = 0, \dots, 2^j-1.\]
For \(k \in [N, 2^j - N),\) these functions are given by \eqref{eq:wavelets}. 
For other values of \(k,\) the basis functions are specially constructed, so 
as to form an orthonormal basis with desired smoothness properties.

We will also need to make an assumption on the precise form of the scaling 
function \(\varphi.\) While this assumption is difficult to verify 
analytically, we will see in the following section it can be tested using 
provably good numerical approximations.

\begin{assumption}
  \label{ass:sigma-maximum} The 1-periodic function
  \[\sigma^2_\varphi(t) \coloneqq \sum_{k \in \Z} \varphi(t - k)^2\]
  attains its maximum \(\overline \sigma^2_\varphi\) at a unique point \(t_0 
  \in [0, 1),\) and \((\sigma^2_\varphi)''(t_0) < 0.\)
\end{assumption}

Given these assumptions, suppose we have an unknown function \(f,\) with 
empirical wavelet coefficients \(\alpha_k, \beta_{j,k},\)
\[f \coloneqq \sum_k \alpha_k \varphi_{j_0, k} + \sum_{j > j_0}\sum_k 
\beta_{j,k}\psi_{j,k}.\]
Suppose also that we observe the empirical wavelet coefficients
\begin{equation}
\label{eq:empirical}
\hat \alpha_k \coloneqq \alpha_k + \epsilon_{j_0, k}, \qquad \hat \beta_{j, k} 
\coloneqq \beta_{j, k} + \epsilon_{j, k},
\end{equation}
where the \(\epsilon_{j, k}\) are i.i.d.\ \(N(0, \sigma^2).\) This is the case 
in the white noise model, where we observe the process
\[Y_t = \int_0^t f(s)\,ds + n^{-1/2} B_t,\]
for a Brownian motion \(B.\) The empirical wavelet coefficients
\[\hat \alpha_k = \int \varphi_{j_0, k}(t)\, dY_t, \qquad \hat \beta_{j, k} = 
\int \psi_{j, k}(t)\, dY_t,\]
satisfy \eqref{eq:empirical} with \(\sigma^2 = n^{-1}\) 
\citep[\S10]{hardle_wavelets_1998}. The model \eqref{eq:empirical} also 
serves as a limiting approximation in density estimation and regression, which 
we return to later.

The wavelet projection estimate of \(f,\) at resolution level \(j,\) is then
\[\hat f(j) \coloneqq \sum_{k} \hat \alpha_k \varphi_{j_0, k} + \sum_{j_0 < l 
\le j} \sum_k \hat \beta_{l, k} \psi_{l, k}.\]
Set
\begin{equation}
\label{eq:upsilon}
\upsilon_\varphi \coloneqq -\frac{\sum_{k \in \Z} \varphi'(t_0-k)^2}{\overline 
\sigma_\varphi \sigma_\varphi''(t_0)},
\end{equation}
and define the quantities
\begin{align*}
  a(j) &\coloneqq \sqrt{2 \log(2) j},\\
  b(j) &\coloneqq a(j) - \frac{\log (\pi \log 2) + \log j - \tfrac12 \log (1 + 
  \upsilon_\varphi)}{2a(j)},\\
  c(j) &\coloneqq \frac{\overline \sigma_\varphi}{\sigma} 2^{j/2},\\
  x(\gamma) &\coloneqq -\log \left(-\log (1-\gamma)\right).
\end{align*}
We then have the following result on the distribution of the variance term. 

\begin{theorem}
  \label{thm:result}
  Let \(j_n \to \infty,\) \(\gamma_0 \in (0, 1),\) and either:
  \begin{enumerate}
    \item for a wavelet basis on \(\R,\) \(\Gamma_n \coloneqq (0, \gamma_0];\) 
      or
    \item for a wavelet basis on \([0, 1],\) \(\Gamma_n \coloneqq [\gamma_n, 
      \gamma_0],\) where \(\gamma_n \in (0, \gamma_0),\) and \(\gamma_n^{-1} = 
      o(e^{Cj_n})\) for any \(C > 0.\)
  \end{enumerate}
  Then, as \(n \to \infty,\)
  \[\sup_{\gamma \in \Gamma_n} \abs*{ \gamma^{-1}
  \P\left(\norm{\fh(j_n) - \E \fh(j_n)}_\infty > 
  c(j_n)\left(\frac{x(\gamma)}{a(j_n)} + b(j_n)\right)\right) - 1} \to 0.\]
\end{theorem}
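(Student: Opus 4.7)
My first step is to write the variance term as an explicit Gaussian process. Using the orthogonal decomposition $V_{j_n} = V_{j_0} \oplus W_{j_0} \oplus \dots \oplus W_{j_n - 1}$ together with the i.i.d.\ $N(0, \sigma^2)$ noise, one obtains
\[\hat f(j_n) - \E \hat f(j_n) \;=\; \sigma \sum_k \xi_k\, \varphi^{*}_{j_n, k},\]
for i.i.d.\ $N(0,1)$ variables $\xi_k$, where $\varphi^{*}_{j_n,k}$ is $\varphi_{j_n,k}$ on $\R$, and on $[0,1]$ coincides with $\varphi_{j_n,k}$ for $k \in [N, 2^{j_n} - N)$ and is boundary-corrected otherwise. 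Substituting $s = 2^{j_n} x$ and normalising by a multiple of $c(j_n)$, I obtain a centred Gaussian process $X_n$ on $s \in [0, 2^{j_n}]$ whose pointwise variance in the interior equals $\sigma_\varphi^2(s)/\sfb^{2}$, a $1$-periodic function attaining its unique maximum of $1$ at $s \equiv t_0 \pmod 1$ by Assumption \ref{ass:sigma-maximum}.

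Next I would verify that $X_n$ is a unit-variance cyclostationary Gaussian process in the interior in the sense of H\"usler. Its covariance depends only on $s - t$ and $s \bmod 1$, and by Assumption \ref{ass:wavelet-basis}(iii) is $C^2$ on the diagonal. The variance has curvature $(\sigma_\varphi^2)''(t_0)/\sfb^{2} < 0$ at the peak, while the derivative variance there is $\sfb^{-2}\sum_k \varphi'(t_0 - k)^2$; the Pickands-type ratio of these two quantities is precisely $\upsilon_\varphi$ from \eqref{eq:upsilon}. An extremal theorem for cyclostationary Gaussian processes on an interval of length $2^{j_n}$ then yields the Gumbel limit with the stated $a(j_n)$, $b(j_n)$: the factor $2\log 2 \cdot j_n$ in $a(j_n)^2$ records the $\sim 2^{j_n}$ periods covered, while the $\tfrac12 \log(1 + \upsilon_\varphi)$ term in $b(j_n)$ comes from a Laplace-type expansion near a single variance peak.

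Two further issues then remain. For uniformity in $\gamma$, I would need to extend H\"usler's theorem so that the Gumbel approximation holds uniformly for levels $x(\gamma)$: with $x(\gamma) \ge x(\gamma_0)$ otherwise unrestricted in case (i), and with $x(\gamma_n) = o(j_n)$ in case (ii). I would do this by tracking, as a function of $x$, the error in the Poisson-clump/Berman-type comparison underlying H\"usler's argument, and verifying that under the hypothesis $\gamma_n^{-1} = o(e^{C j_n})$ this error is $o(\gamma_n)$. For the $[0, 1]$ basis, the boundary-corrected functions at each endpoint number only $O(1)$, so their joint contribution is controlled by a Gaussian maximum of bounded dimension, giving an $O(\sqrt{x(\gamma_n)}) = o(\sqrt{j_n})$ correction that is dominated by the interior sup after a truncation and union-bound argument.

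I expect the main difficulty to be the uniform-in-level extension of H\"usler's theorem, since it requires opening up the Poisson-clump approximation in the original proof and making its error terms quantitative in $x$, with bounds tight enough to cover the full range of $\gamma \in \Gamma_n$ permitted by the hypotheses, including arbitrarily small $\gamma$ in case (i).
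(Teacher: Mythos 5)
Your plan matches the paper's proof essentially step for step: the same orthogonal change of basis reducing the variance process to a single-scale sum $\sfb^{-1}2^{-j_n/2}\sum_k Z_k\varphi_{j_n,k}(2^{-j_n}\cdot)$, the same reduction to a cyclostationary extreme-value theorem with the uniformity in the level obtained by making the error terms in H\"usler's argument quantitative (the paper's \autoref{lem:cyclostationary-process}), and the same treatment of the $O(1)$ boundary-corrected functions on $[0,1]$ via a union bound shown to be $o(\gamma_n)$ under the hypothesis on $\gamma_n$. The approach is sound and consistent with the paper's own argument.
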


While this result is stated for the white noise model, similar results hold 
also in density estimation and regression. In density estimation, \(f\) is a 
density, and we observe
\[X_1, \dots, X_n \iid f.\]
This can be linked to the white noise model using 
\citet[\S4.1]{gine_confidence_2010}. In regression, we have independent 
observations
\[Y_i \sim N(f(x_i), \sigma^2),\]
for \(x_i \coloneqq i/n,\) \(i = 1, \dots, n.\) Regression is known to be 
asymptotically equivalent to white noise, as in \citet{brown_asymptotic_1996}.  
We can thus transfer our result also to these models.

\section{Numerical approximations}
\label{sec:numerics}

To apply our result, we must first verify \autoref{ass:sigma-maximum},
which depends on the function \(\varphi\) and its derivatives. In general, 
\(\varphi\) has no explicit form, but we can approximate it numerically using 
the cascade algorithm. \(\varphi\) satisfies a two-scale relation,
\[\varphi(x) = \sum_{k=0}^{2K-1} u_k^{(0)} \varphi(2x+K-k),\]
for filter coefficients \(u^{(0)}_0, \dots, u^{(0)}_{2K-1} \in \R\) satisfying
\[\sum_{k \text{ odd}} u^{(0)}_k = \sum_{k \text{ even}} u^{(0)}_k = 1,\]
and we can use these filter coefficients to compute an approximation to 
\(\varphi.\)

For \(n \ge 0,\) \(k = 0, \dots, 2K-1,\) set
\[u^{(n+1)}_k \coloneqq 2 \sum_{i=0}^k (-1)^i u^{(n)}_{k-i},\]
and for \(j, n \ge 0,\) \(0 \le k < 2^j(2K-1),\) \(x \in [1-K, K),\)
\begin{align*}
  g^{(n)}_{0, k} &\coloneqq \delta_k, &
  g^{(n)}_{j+1, k} &\coloneqq \sum_{i \in \Z} g_{j, i}^{(n)} u_{k-2i}^{(n)},\\
  f^{(n)}_{j, k} &\coloneqq \sum_{i=0}^n \binom{n}{i} (-1)^i g^{(n)}_{j, k-2^j 
  i}, &
  f^{(n)}_j(x) &\coloneqq f^{(n)}_{j, \lfloor 2^j (x + K - 1) \rfloor}.
\end{align*}
The functions \(f^{(0)}_j\) then converge to a limit function \(f\) defined by 
the \(u^{(0)}_k,\) and the \(f^{(n)}_j\) likewise converge to \(f^{(n)}.\)  
The following theorem bounds the error in this approximation, and is a 
straightforward consequence of results in \citet{rioul_simple_1992}.

\begin{theorem}
  \label{thm:wavelet-approx}
  For integers \(j, n \ge 0,\) set
  \begin{multline*}
    \begin{aligned}
      \alpha_j^{(n)} &\coloneqq 1 - j^{-1} \log_2 \left( \max_{k=0}^{2^j-1} 
      \sum_{i=0}^{2K-2} \abs{g_{j, k+2^j i}^{(n+1)}} \right),\\
      C_j^{(n)} &\coloneqq \left(1-2^{-\alpha_j^{(n)}}\right)^{-1} \left( 
      \max_{l=0}^{j-1} \max_{k=0}^{2^l(2K-1)-1} 2^{(\alpha_j^{(n)} - 1)l} 
      \abs{f_{l, k}^{(n+1)}}\right)
    \end{aligned}
    \\ \left(\max_{m=0, 1} \sum_{k=0}^{K-1} \abs*{ \sum_{i=0}^k 
    u_{2i+m}^{(n)}-1}\right).
  \end{multline*}
  If \(\alpha_j^{(0)} > 0\) for some \(j,\) the functions \(f_j^{(0)}\) 
  converge in \(L^\infty\) to a function \(f:[1-K, K) \to \R\) satisfying
  \[f(x) = \sum_{k=0}^{2K-1} u_k^{(0)} f(2x+K-k).\]
  If also \(\alpha_j^{(n)} > 0\) for some \(j,\) and \(n > 0,\) then \(f\) is 
  \(n\)-times-differentiable, and the \(f_j^{(n)}\) converge in \(L^\infty\) 
  to \(f^{(n)}.\)
  For \(n \ge 0,\) the approximations \(f_j^{(n)}\) converge at a rate
  \[\norm{f_j^{(n)} - f^{(n)}}_{L^\infty} \le C_j^{(n)} 2^{-j 
  \alpha_j^{(n)}}.\]

  Furthermore, given integers \(j \ge 0,\) \(a \le b,\) set
  \[I \coloneqq 2^{-j}[a, b+1) + \Z, \qquad J(l) \coloneqq \left[\lfloor 
  2^{l-j} a \rfloor - 2K + 2, \lfloor 2^{l-j}b \rfloor\right] + 2^l \Z.\]
  Then, on \(I \cap [1-K, K)\):
  \begin{enumerate}
    \item for \(l \ge j,\) the values of \(f^{(n)}_l\) depend on 
      \(g_{j,k}^{(n)}\) only for \(k \in J(j);\) and
    \item the above results hold also for quantities \(\alpha_j^{(n)}(I)\) and 
      \(C_j^{(n)}(I)\) defined similarly, taking maxima over 
      \(g_{l,k}^{(n+1)}\) and \(f_{l,k}^{(n+1)}\) only for \(k \in J(l).\)
  \end{enumerate}
\end{theorem}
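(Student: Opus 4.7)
The plan is to identify the quantities in the statement with the ingredients of the convergence analysis of the cascade algorithm carried out by \citet{rioul_simple_1992}, and then read off the stated bounds from his estimates.

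First, I would interpret the arrays \(g^{(n)}_{j,k}\) as the \(j\)-fold iterate of the binary subdivision operator with mask \(u^{(n)}\) applied to the unit impulse, and \(f^{(n)}_j\) as the piecewise-constant reconstruction at scale \(2^{-j}\) of the finite difference of these values prescribed by the \(\binom{n}{i}(-1)^i\) coefficients. The alternating-sum recursion for \(u^{(n+1)}\) is chosen precisely so that, at the level of symbols, the mask \(u^{(n+1)}\) is the refinement mask whose cascade iterates, after finite differencing, approximate the \(n\)-th derivative of the refinable function \(f\) determined by \(u^{(0)}\).

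Next, I would invoke Rioul's contraction bound. The quantity
\[2^{j(1-\alpha_j^{(n)})} = \max_{k=0}^{2^j-1} \sum_{i=0}^{2K-2} \abs{g^{(n+1)}_{j,k+2^j i}}\]
is the \(\ell^\infty\) operator norm of one block of \(j\) subdivision steps applied to the derivative mask \(u^{(n+1)},\) so \(\alpha_j^{(n)} > 0\) ensures a strict contraction with factor \(2^{-\alpha_j^{(n)}}\) per block. Telescoping the differences between reconstructions at consecutive blocks then gives a Cauchy sequence in \(L^\infty,\) and the three factors making up \(C_j^{(n)}\) play the following roles: \((1-2^{-\alpha_j^{(n)}})^{-1}\) sums the resulting geometric series; the inner maximum over \(2^{(\alpha_j^{(n)}-1)l}\abs{f^{(n+1)}_{l,k}}\) bounds the worst-case size of an intermediate reconstruction, fixing the starting scale of the telescoping; and \(\max_m \sum_k \abs{\sum_i u^{(n)}_{2i+m}-1}\) accounts for the defect of \(u^{(n)}\) from the exact partition-of-unity property required to compare reconstructions on consecutive grids. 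Passing to the limit and identifying it as \(f^{(n)}\) via integration and dominated convergence gives the claimed convergence, differentiability, and rate.

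For the localised statement, I would exploit the finite support of the subdivision operator: \(g^{(n)}_{l+1,k}\) depends on \(g^{(n)}_{l,\cdot}\) only through the window \(\{i : 0 \le k-2i \le 2K-1\},\) and \(f^{(n)}_l(x)\) for \(x \in I\) depends on \(g^{(n)}_{l,\cdot}\) only through the corresponding shifted window, enlarged by the \(n\)-fold finite differencing. Propagating this dependency backwards from an arbitrary level \(l \ge j\) down to level \(j\) yields exactly the set \(J(j)\) of the statement, and all maxima defining \(\alpha_j^{(n)}(I)\) and \(C_j^{(n)}(I)\) may be taken over the sets \(J(\cdot)\) alone. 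The main work is bookkeeping: matching each piece of the formulas for \(\alpha_j^{(n)}\) and \(C_j^{(n)}\) to the corresponding ingredients of Rioul's bounds, and verifying that the window sizes propagate correctly through the alternating-sum, subdivision, and finite-difference operations.
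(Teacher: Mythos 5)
Your plan coincides with the paper's: the paper gives no written proof of this theorem at all, stating only that it ``is a straightforward consequence of results in \citet{rioul_simple_1992}'', and your identification of \(\alpha_j^{(n)}\) with Rioul's contraction exponent for the difference scheme with mask \(u^{(n+1)}\), of the three factors of \(C_j^{(n)}\) with the geometric-series, intermediate-reconstruction, and partition-of-unity-defect terms in his telescoping bound, and of the locality claim with backward propagation of the finite subdivision window is exactly the intended derivation. Your sketch is therefore correct in approach and, if anything, more explicit than the paper itself.
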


The function \(\varphi\) may be defined as the limit of this procedure 
\citep[\S6.5]{daubechies_ten_1992}. We may thus compute upper and lower bounds 
on \(\varphi\) and its derivatives.  Note that, while we could obtain values 
of the derivatives by finite differencing, this would be numerically unstable, 
and lead to poor bounds; the above procedure provides good bounds on all 
derivatives of \(\varphi.\)

To verify \autoref{ass:sigma-maximum}, and to compute the constants \(\sfb^2\) 
and \(\upsilon_\varphi,\) we must use these bounds to control the function 
\(\sigma^2_\varphi,\) and its derivatives. Doing so over the whole of \([0, 
1]\) requires memory exponential in \(j,\) which quickly becomes infeasible.  
However, once we have approximated \(\sigma^2_\varphi\) well enough to know 
that its maxima lie in some interval \(I,\) we can exploit the local nature of 
the cascade algorithm, and its bounds, to approximate \(\sigma^2_\varphi\) 
only over \(I.\) As the resolution \(j\) increases, so does the accuracy with 
which we can locate the maxima, ensuring our memory costs remain manageable.

In our implementation, we choose \(I\) to be the smallest interval containing 
all points \(t\) for which the bounds on \(\sigma^2_\varphi,\) and its
derivative, are consistent with:
\begin{enumerate}
  \item \(\sigma_\varphi^2(t) = \sup_{s \in [0, 1]} \sigma_\varphi^2(s);\) and
  \item \( (\sigma_\varphi^2)'(t) = 0.\)
\end{enumerate}
Note that to ensure efficiency, we must allow choices of \(I\) which wrap 
around the edges of \([0, 1];\) in other words, we must allow \(I\) to be any 
interval on the torus. If we find an interval \(I\) containing all maxima of 
\(\sigma^2_\varphi,\) with the property that \(\sigma_\varphi'' \le 
-\varepsilon < 0\) on \(I,\) we may conclude \autoref{ass:sigma-maximum} is 
satisfied. We have thus described \autoref{alg:verify}.

\begin{algorithm}
\caption{Verify assumption and compute constants}
\label{alg:verify}
\begin{algorithmic}
  \STATE \(I \gets [0, 1]\)
  \STATE \(j \gets 0\)
  \REPEAT
  \STATE calculate approximations \(\varphi^{(n)}_j\) to \(\varphi^{(n)}\) on 
  \(I,\) \(n = 0, 1, 2\)
  \STATE deduce bounds on \((\sigma^2_\varphi)^{(n)}\) on \(I,\) \(n = 0, 1, 
  2\)
  \STATE deduce bounds on \(\sfb^2\) and \(\upsilon_\varphi\)
  \STATE \(I \gets\) smallest interval known to contain all maxima of 
  \(\sigma^2_\varphi\)
  \STATE \(j \gets j + 1\)
  \UNTIL{desired accuracy reached}
  \IF{\(\sigma_\varphi''\) bounded below zero on \(I\)}
  \STATE \autoref{ass:sigma-maximum} is verified
  \ENDIF
\end{algorithmic}
\end{algorithm}

To obtain high accuracy, the computation of the filter coefficients \(u_k,\) 
and subsequent approximations, must be performed using variable-precision 
arithmetic; the rounding error in these computations must likewise be 
controlled with interval arithmetic. We satisfy these requirements by 
implementing the above algorithm in the computer algebra system Mathematica.  
For Daubechies wavelets and symlets, \(N = 6, \dots, 20,\) we find that 
\autoref{ass:sigma-maximum} is indeed satisfied, and obtain accurate values of 
\(\sfb^2\) and \(\upsilon_\varphi,\) given in \autoref{tab:wavelet-values}.

\begin{table}
  \centering
  \begin{tabular}{rllll}
    \toprule
    & \multicolumn{2}{c}{Daubechies} & \multicolumn{2}{c}{Symlet}    \\
    \midrule
    \multicolumn{1}{c}{\(N\)} &
    \multicolumn{1}{c}{\(\overline \sigma^2_\varphi\)} & 
    \multicolumn{1}{c}{\(\upsilon_\varphi\)} &
    \multicolumn{1}{c}{\(\overline \sigma^2_\varphi\)} & 
    \multicolumn{1}{c}{\(\upsilon_\varphi\)} \\
    \midrule
    6    &   1.251 716   &   0.221 993   &   1.361 961   &   0.106 518   \\
    7    &   1.276 330   &   0.197 328   &   1.253 835   &   0.248 681   \\
    8    &   1.250 928   &   0.266 316   &   1.286 722   &   0.173 642   \\
    9    &   1.222 637   &   0.275 519   &   1.232 334   &   0.302 351   \\
    10   &   1.199 772   &   0.391 629   &   1.243 114   &   0.255 337   \\
    11   &   1.195 384   &   0.415 019   &   1.209 007   &   0.324 200   \\
    12   &   1.189 984   &   0.445 388   &   1.215 480   &   0.335 022   \\
    13   &   1.182 351   &   0.460 792   &   1.195 567   &   0.385 147   \\
    14   &   1.172 690   &   0.510 179   &   1.195 969   &   0.405 884   \\
    15   &   1.165 335   &   0.553 767   &   1.184 307   &   0.446 419   \\
    16   &   1.159 678   &   0.594 027   &   1.181 901   &   0.465 670   \\
    17   &   1.154 955   &   0.621 941   &   1.174 105   &   0.496 485   \\
    18   &   1.150 103   &   0.652 913   &   1.170 871   &   0.520 228   \\
    19   &   1.145 393   &   0.686 434   &   1.164 974   &   0.551 765   \\
    20   &   1.141 050   &   0.722 113   &   1.161 837   &   0.571 150   \\
    \bottomrule
  \end{tabular}
  \caption{Computed values of constants}
  \label{tab:wavelet-values}
\end{table}

\subsection*{Acknowledgements}

We would like to thank Richard Nickl for his valuable comments and 
suggestions.

\appendix

\section{Proofs}
\label{app:proofs}

We will need the following result, which is a version of Theorem 1 in 
\citet{husler_extremes_1999}. The result concerns the maxima of centred 
Gaussian processes whose variance functions are periodic; such processes are 
called {\em cyclostationary}. In \citeauthor{husler_extremes_1999}'s original 
result, the maxima of a sequence of processes was shown to converge to a 
Gumbel random variable. In our result, we will specialise to a single process, 
and show this convergence occurs uniformly.

\begin{lemma}
  \label{lem:cyclostationary-process}

  Let \(T = T(n) \to \infty\) as \(n \to \infty.\) In the notation of 
  \citet{husler_extremes_1999}, let (A1)--(A3) and (B1)--(B4) hold, for a 
  fixed process \(X_n(t) = X(t),\) not depending on \(n.\) Further let \(\alpha = \beta,\) and 
  let \citeauthor{husler_extremes_1999}'s condition (1) hold.  Define
  \[u(\tau) = \sigma_n \mu^{-1}(\tau/m_T).\]
  Then for any \(\tau_0 > 0,\) we have
  \[\sup_{\tau \in (0, \tau_0]} \abs*{ \frac{\P(M_n(T) > u(\tau))}{1 - 
  e^{-\tau}} - 1} \to 0\]
  as \(n \to \infty.\)
\end{lemma}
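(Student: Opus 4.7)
The plan is to revisit Hüsler's proof of his Theorem~1 and track how each error term depends on the level $\tau,$ showing that every error is $o(\tau)$ uniformly on $(0, \tau_0].$ Since $1 - e^{-\tau} \sim \tau$ as $\tau \to 0^+,$ this is equivalent to the ratio convergence claimed in the conclusion.

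The strategy is Poissonization of the exceedance count. Using periodicity, I would partition $[0, T]$ into the $m_T$ cyclostationary periods and, within each period, isolate a small neighborhood of the variance maximizer. Let $N(\tau)$ count the periods whose neighborhood contains an exceedance of $u(\tau).$ Under (A1)--(A3) and $\alpha = \beta,$ a Pickands--Piterbarg double-sum argument gives
\[\E\!\left[\binom{N(\tau)}{k}\right] = \frac{\tau^k}{k!}\,(1 + o_n(1)), \qquad k = 1, 2, \dots,\]
where the $o_n(1)$ error depends on the threshold $u(\tau)$ and on $n$ only through $T = T(n),$ and can be traced back to the discretization of the local supremum and to the quadratic Taylor approximation of the covariance near the maximum of $\sigma.$ The weak-dependence inequality supplied by (B1)--(B4) is what allows us to factorize the $k$-fold joint exceedance probability into the $k$-th power of the single-period probability, up to a correction negligible against $(\tau/m_T)^k.$ Coupling factorial-moment convergence with a Stein--Chen Poisson approximation then yields
\[\lvert \P(N(\tau) \ge 1) - (1 - e^{-\tau}) \rvert \le \tau \cdot o_n(1)\]
uniformly in $\tau \in (0, \tau_0].$ Finally, Hüsler's condition (1) bounds the contribution of exceedances outside the chosen neighborhoods by $o(\tau)$ uniformly, so $\P(M_n(T) > u(\tau)) = \P(N(\tau) \ge 1) + o(\tau)$ uniformly, and dividing by $1 - e^{-\tau} \sim \tau$ finishes the argument.

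The main obstacle is establishing the Pickands--Piterbarg estimate with an error uniform in $\tau.$ In the standard proof the $(1 + o(1))$ factor arises from quantities of the form $\epsilon(u),$ deterministic in $u$ and tending to zero as $u \to \infty.$ Because the process $X$ is fixed and $u(\tau)$ is decreasing in $\tau,$ we have $u(\tau) \ge u(\tau_0)$ throughout $(0, \tau_0],$ so $\epsilon(u(\tau)) \le \epsilon(u(\tau_0)),$ and $\epsilon(u(\tau_0)) \to 0$ as $n \to \infty$ via $T(n) \to \infty$ and the consequent $u(\tau_0) \to \infty.$ The key observation---that small $\tau$ corresponds to a higher, easier threshold rather than a harder one---is what converts Hüsler's pointwise argument into the uniform one, with the bookkeeping of constants in the factorial-moment bound requiring only cosmetic changes.
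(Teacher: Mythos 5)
Your proposal is correct and follows essentially the same route as the paper's proof: re-run H\"usler's argument for his Theorem~1, check that each approximation error is $O(\tau\cdot o(1))$ or $o(\tau)$ uniformly, and obtain the uniformity from the fact that the process is fixed, so the errors depend on $n$ and $\tau$ only through the threshold $u(\tau) \ge u(\tau_0) \to \infty.$ One small repair: the step $\epsilon(u(\tau)) \le \epsilon(u(\tau_0))$ tacitly assumes $\epsilon$ is monotone in $u$; what you actually need (and have) is $\sup_{\tau \le \tau_0} \epsilon(u(\tau)) \le \sup_{v \ge u(\tau_0)} \epsilon(v) \to 0.$
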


\begin{proof}
  Our argument proceeds as in the proof of Theorem 1 in
  \citet{husler_extremes_1999}.  Without loss of generality, we may assume 
  that \(\sigma_n = 1.\) For \(\tau \le \tau_0,\) \(u(\tau) \ge u(\tau_0) \to 
  \infty,\) and by definition
  \[m_T \mu(u(\tau)) = \tau.\]
  The approximation errors in parts (i) and (ii) of 
  \citeauthor{husler_extremes_1999}'s proof are thus \(O(g(S)\tau)\) and 
  \(O(\rho_c\tau)\) respectively.  In part (iii), we note that
  \[u(\tau)^2 = 2 \log (T/\tau) - \log \log (T/\tau) + O(1),\]
  so \citeauthor{husler_extremes_1999}'s term (4) is of order
  \begin{multline*}
    \tau^{1+\eta}(T/\tau)^{1+\eta}u(\tau)^{2/\alpha}\exp\{-u(\tau)^2/(1+\gamma)\}
    \\= \tau^{1+\eta} \exp\{-[(1-\gamma)/(1+\gamma) - \eta]\log (T/\tau) + 
    o(\log(T/\tau))\} = o(\tau),
  \end{multline*}
  and term (5) is of order
  \begin{multline*}
    \tau^2 (T/\tau)^2 \delta(T^\eta) \exp \{-(2\log (T/\tau) - \log \log 
    (T/\tau))(1 - \delta(T^\eta)\}
    \\= O(\tau^2 \delta(T^\eta) \log (T/\tau)) = o(\tau).
  \end{multline*} 
  In \citeauthor{husler_extremes_1999}'s final display, we may thus write
  \[\P(M_n(T) \le u(\tau)) = \exp\{-(1+o(1))\tau\} + o(\tau).\]

  As the process \(X(t)\) does not depend on \(n,\) the error in each of these 
  approximations depends only on \(u = u(\tau),\) and the above limits hold as \(u \to 
  \infty.\) (This can be seen from the precise form of the errors, as given in 
  \citealp[\S3.1]{piterbarg_linear_1994}, and in 
  \citeauthor{husler_extremes_1999}'s proof.) Since \(u\) is decreasing in 
  \(\tau,\) the limits are therefore uniform in \(\tau\) small.

  Consider the function
  \[f(x, y; \tau) \coloneqq \log\left(\frac{1 - \exp(-(1 + x)\tau)}{\tau} + 
  y\right),\]
  defined on \(0 \le \tau \le \tau_0,\) \(\abs{x} \le \tfrac12,\) \(\abs{y} 
  \le \tfrac12 (1 - \exp(-\tfrac12\tau_0))/\tau_0.\) The derivatives
  \[\frac{\delta f}{\delta x} = \frac{\exp(-(1 + x)\tau)}{\exp f}, \qquad 
  \frac{\delta f}{\delta y} = \frac{1}{\exp f}\]
  are finite, and continuous in \(x,\) \(y\) and \(\tau,\) so by the mean 
  value inequality, for \(n\) large,
  \begin{align*}
    \log \left(\frac{\P(M_n(T) > u(\tau))}{\tau}\right) &= f(o(1), o(1); \tau)
    \\&= f(0, 0; \tau) + o(1) \\
    &= \log\left(\frac{1-e^{-\tau}}{\tau}\right) + o(1).
  \end{align*}
  As the above limits are uniform in \(\tau \le \tau_0,\) the result follows.
\end{proof}

We now apply this result to a cyclostationary process, composed of scaling 
functions \(\varphi,\) which we can use to model the variance of estimators 
\(\fh(j_n).\)

\begin{lemma}
  \label{lem:wavelet-process}
  Define the cyclostationary Gaussian process
  \[X(t) \coloneqq \sfb^{-1} \sum_{k \in \Z} \varphi(t - k) Z_k, \qquad Z_k 
  \iid N(0, 1).\]
  For any \(\gamma_0 \in (0, 1),\) \(j_n \to \infty,\)
  \[\sup_{\gamma \in (0, \gamma_0]} \abs*{ \gamma^{-1}
  \P\left(\sup_{t \in [0, 2^{j_n}]} \abs{X(t)} > \frac{x(\gamma)}{a(j_n)} + 
  b(j_n)\right) - 1 } \to 0\]
  as \(n \to \infty.\)
\end{lemma}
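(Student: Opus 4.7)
The strategy is to apply \autoref{lem:cyclostationary-process} to the fixed process $X$ with interval length $T = T_n \coloneqq 2^{j_n},$ after reducing the absolute-value supremum to a one-sided one and verifying Hüsler's technical hypotheses. Cyclostationarity with period 1 is immediate, since $X(t+1) = \sfb^{-1}\sum_k \varphi(t-(k-1))Z_k$ has the same distribution as $X(t)$; the variance function is $\Var X(t) = \sigma^2_\varphi(t)/\sfb^2,$ which by \autoref{ass:sigma-maximum} attains its maximum $1$ uniquely at $t_0 \in [0,1)$ modulo integers, with $(\sigma^2_\varphi)''(t_0) < 0.$ Compact support of $\varphi$ (\autoref{ass:wavelet-basis}(i)) makes the covariance $r(s,t) = \sfb^{-2}\sum_k \varphi(s-k)\varphi(t-k)$ a finite sum at each $(s,t),$ and $\varphi \in C^2$ (\autoref{ass:wavelet-basis}(iii)) ensures that $r$ is $C^2$ jointly.

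The verification of (A1)--(A3) and (B1)--(B4) should then be largely mechanical: the smooth parabolic expansion of both $1 - \Var X(t)$ near $t_0$ and of $1 - \Cov(X(t_0), X(t_0+s))/\Var X(t_0)$ near $s=0$ gives Hüsler's local indices $\alpha = \beta = 2$; compact support of $\varphi$ confines the covariance to the band $|s-t| \le 2K-1,$ which is far stronger than the mixing conditions (B3)--(B4) require; and condition (1) on the covariance away from the diagonal reduces to a statement about the Gram structure of integer translates of $\varphi,$ which holds unless $\varphi$ is degenerate. After these verifications, \autoref{lem:cyclostationary-process} yields
\[
\sup_{\tau \in (0,\tau_0]} \left|\frac{\P(\sup_{t\in [0,T_n]} X(t) > u(\tau))}{1-e^{-\tau}} - 1\right| \to 0,
\]
with $u(\tau) = \mu^{-1}(\tau/m_{T_n})$ where $\mu(u) = 1 - \Phi(u)$ and $m_T$ is Hüsler's normalising count.

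The next step is to translate this into the stated formula. Using $\mu(u) \sim \phi(u)/u$ and unwinding Hüsler's definition of $m_T$ in terms of the Pickands-type constant coming from the parabolic variance expansion, one obtains $u^2 = 2\log T + \log\log T + C_\varphi + o(1)$ for an explicit constant $C_\varphi$ depending on $(\sigma^2_\varphi)''(t_0)$ and $\sum_k \varphi'(t_0 - k)^2$; matching this to the decomposition $u = b(j_n) + x(\gamma)/a(j_n)$ with $a(j_n) = \sqrt{2\log 2 \cdot j_n} = \sqrt{2\log T_n}$ identifies the constants $\pi\log 2$ (from the standard Mills-ratio correction combined with $\log T_n = j_n \log 2$) and $1 + \upsilon_\varphi$ (the precise ratio of curvatures packaged in \eqref{eq:upsilon}). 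To pass from $\sup X$ to $\sup \abs{X},$ we use the classical fact that for a smooth centred Gaussian process, up-crossings and down-crossings of high levels become asymptotically independent Poisson, so that $\P(\sup_{[0,T_n]}\abs{X} > u) = 2\mu_{+}(u)(1+o(1))$ where $\mu_+$ is the one-sided tail; this effectively doubles $\tau$ and contributes the missing $\log 2$ in the $b(j_n)$ correction.

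The main obstacle is bookkeeping rather than conceptual: carefully checking Hüsler's conditions for a genuinely non-stationary periodic Gaussian process, and then tracking the lower-order terms in the inversion $\mu^{-1}(\tau/m_{T_n})$ so that the constants $\pi\log 2$ and $\upsilon_\varphi$ fall out exactly. A secondary point that needs attention is the uniformity in $\gamma \in (0,\gamma_0],$ which must be inherited from the uniformity in $\tau \in (0,\tau_0]$ supplied by \autoref{lem:cyclostationary-process}; since $\tau = -\log(1-\gamma)$ is bounded on $\gamma \in (0,\gamma_0],$ this is straightforward provided the factor-of-two reduction and the $o(1)$ terms in the level inversion are themselves uniform in $\tau,$ which they are because they depend only on $u(\tau) \to \infty.$
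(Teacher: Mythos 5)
Your overall architecture is workable --- the paper itself reduces the lemma to a Hüsler-type extreme-value theorem (via Theorem 2 of \citet{gine_confidence_2010}, which packages the level inversion and the constants \(\pi\log 2\) and \(1+\upsilon_\varphi\) that you propose to rederive by hand), and upgrades to uniformity in \(\gamma\) exactly as you suggest, by substituting the uniform version of Hüsler's theorem. But there is a genuine gap at the step you dismiss as ``holds unless \(\varphi\) is degenerate.'' The strict non-degeneracy conditions --- that \(r_X(s,t)^2 < \sigma_X^2(s)\sigma_X^2(t)\) for all \(s \ne t,\) and that \(\E[X'(t_0)^2] = \sfb^{-2}\sum_k \varphi'(t_0-k)^2 > 0\) --- are not mechanical for compactly-supported wavelets: \(\varphi\) has no closed form, so ``non-degeneracy'' is precisely what must be proved, and it is the substantive content of the paper's proof of this lemma. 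Equality in Cauchy--Schwarz for the Gram vectors \((\varphi(s-k))_k\) and \((\varphi(t-k))_k\) at some pair \(s\ne t\) is not ruled out by anything you have invoked.

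The paper closes this gap with a specific idea you would need: since \(\varphi\) is \(C^2\) and compactly supported, Corollary 5.5.4 of \citet{daubechies_ten_1992} forces \(\psi\) to have at least two vanishing moments, so the functions \(1\) and \(x\) are reproduced exactly by the scaling functions at a single level \(J.\) Writing \(v(t) = (\varphi_{J,k}(t))_k,\) one then has \(\langle \alpha^0, v(s)\rangle = \langle \alpha^0, v(t)\rangle = 1\) but \(\langle \alpha^1, v(s)\rangle = s \ne t = \langle \alpha^1, v(t)\rangle,\) so \(v(s)\) and \(v(t)\) are linearly independent and Cauchy--Schwarz is strict; similarly \(\langle \alpha^1, v'(t)\rangle = 1\) forces \(v'(t) \ne 0,\) giving \(\sum_k \varphi'(t_0-k)^2 > 0\) (which the paper also needs to make sense of \(\upsilon_\varphi\) in \eqref{eq:upsilon}). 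Without an argument of this kind your verification of Hüsler's hypotheses does not go through, and the remainder of your proposal --- the level inversion, the doubling for \(\abs{X},\) and the uniformity transfer --- is bookkeeping built on an unverified foundation. Those later steps are otherwise in the right spirit, though you would save considerable labour by citing Giné and Nickl's Theorem 2 for the constants rather than unwinding \(m_T\) yourself.
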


\begin{proof}
  For fixed \(\gamma,\) the result is a consequence of Theorem 2 in 
  \citet{gine_confidence_2010}; the statement uniform over \((0, \gamma_0]\) 
  follows, replacing Theorem 1 of \citet{husler_extremes_1999} in 
  \citeauthor{gine_confidence_2010}'s proof with 
  \autoref{lem:cyclostationary-process}.  The conditions of 
  \citeauthor{gine_confidence_2010}'s theorem are satisfied by 
  \hyperref[ass:wavelet-basis]{Assumptions \ref{ass:wavelet-basis}} and 
  \ref{ass:sigma-maximum}, as follows.
  \begin{enumerate}
    \item \(X\) has almost-sure derivative
      \[X'(t) \coloneqq \sfb^{-1} \sum_{k \in \Z} \varphi'(t-k) Z_k,\]
      so is continuous. \(X'\) is also the mean square derivative:
      \begin{multline*}
        h^{-1}\E[(X(t+h)-X(t)-hX'(t))^2]\\
          = h^{-1} \sum_{k \in \Z} \left(\varphi(t-h-k) - \varphi(t-k) 
          -h\varphi'(t-k)\right)^2,
      \end{multline*}
      which tends to 0 as \(h \to 0,\) since the sum has finitely many 
      non-zero terms.

    \item For \(i=0, 1,\) define functions \(f_i(x) \coloneqq x^i\) on \([0, 
      1],\) having wavelet expansions
      \[f_i = \sum_k \alpha_{J, k}^i \varphi_k + \sum_{j>J} \sum_k 
      \beta_{j,k}^i \psi_{j,k}\]
      in our wavelet basis on \([0, 1],\) for some \(J \ge j_0,\) \(2^J \ge 
      6K.\)
      As \(\psi\) is twice continuously differentiable, and \(\varphi\) and 
      \(\psi\) have compact support, by Corollary 5.5.4 in 
      \citet{daubechies_ten_1992}, \(\psi\) has at least two vanishing moments. 
      Thus
      \[\beta_{j,k}^i = \langle x^i, \psi_{j,k} \rangle = 0,\]
      and
      \[f_i(t) = \sum_k \alpha_k^i \varphi_{J,k}(t).\]

      For \(t \in [0, 1],\) let \(v(t)\) denote the vector 
      \((\varphi_{J,k}(t)) \in \R^{2^J},\) so \(f_i(t) = \langle \alpha^i, 
      v(t) \rangle.\) Given \(s \ne t,\) we have
      \begin{align*}
        \langle \alpha^0, v(s) \rangle &= 1 = \langle \alpha^0, v(t) 
        \rangle,\\
        \langle \alpha^1, v(s) \rangle &= s \ne t = \langle \alpha^1, v(t) 
        \rangle,
      \end{align*}
      so the vectors \(v(s),\) \(v(t)\) are linearly independent.
      
      For \(s, t \in \R,\) define
      \[r_X(s, t) \coloneqq \Cov[X(s), X(t)], \qquad \sigma^2_X(t) \coloneqq 
      \Var[X(t)] = r_X(t, t).\]
      Then, if \(s, t \in [-K, K],\)
      \begin{align*}
        r_X(s, t) &= \sfb^{-2} \sum_{k \in \Z} \psi(s-k)\psi(t-k)\\
        &= \sfb^{-2} 2^{-J} \langle v(\tfrac12 + 2^{-J}s), v(\tfrac12 + 
        2^{-J}t)\rangle,
      \end{align*}
      so by Cauchy-Schwarz,
      \[r_X(s, t)^2 < \sigma_X^2(s) \sigma_X^2(t).\]
      If \(s, t \in [k-K, k+K]\) for some \(k \in \Z,\) the same applies by 
      cyclostationarity. If not, then as \(\varphi\) is supported on \([1-K, 
      K],\) we have \(r_X(s, t) = 0.\) However, for any \(t \in [0, 1],\) 
      \(\langle \alpha^1, v(\tfrac12 + 2^{-J}t) \rangle = 1,\) so
      \[\sigma_X^2(t) = \sfb^{-2} 2^{-J} \norm{v(t)}^2 > 0,\]
      and by cyclostationarity the same holds for all \(t \in \R.\) We thus 
      again obtain
      \[r_X(s, t)^2 < \sigma_X^2(s) \sigma_X^2(t).\]

    \item We have
      \[\sigma_X^2(t) = \sfb^{-2} \sigma_\varphi^2(t),\]
      so by \autoref{ass:sigma-maximum}, \(\sup_{t \in [0,1]} \sigma_X^2(t) = 
      1,\) and this maximum is attained at a unique \(t_0 \in [0, 1).\) If 
      \(t_0 \in (0, 1),\) this satisfies the conditions of the theorem 
      directly; if not we may proceed as in Proposition 9 of 
      \citet{gine_confidence_2010}.  \(\sigma_\varphi^2\) is twice 
      differentiable,
      \[2\sfb \sigma_X'(t_0) = (\sigma_\varphi^2)'(t_0) 
      \sigma_\varphi^2(t_0)^{-1/2} = 0,\]
      and
      \begin{align*}
        2 \sfb \sigma_X''(t_0) &= (\sigma_\varphi^2)''(t_0) 
        \sigma_\varphi^2(t_0)^{-1/2} - \tfrac12 (\sigma_\varphi^2)'(t_0)^2 
        \sigma_\varphi^2(t_0)^{-3/2}\\
        &= (\sigma_\varphi^2)''(t_0) \sigma_\varphi^2(t_0)^{-1/2} < 0.
      \end{align*}
      Finally, let \(v'(t)\) denote the vector \((\varphi'_{J,k}(t)) \in 
      \R^\Z.\) Then for \(t \in [0, 1],\)
      \[\langle \alpha^1, v'(t) \rangle = f_1'(t) = 1,\]
      so
      \begin{align}
      \label{eq:upsilon-numerator}
      \E[X'(t_0)^2]
      &= \sfb^{-2} \sum_{k \in \Z} \varphi'(t_0-k)^2\\
      \notag &= \sfb^{-2} 2^{-J} \norm{v'(\tfrac12 + 2^{-J}t_0)}^2 > 0.
      \end{align}

    \item Since \(\varphi\) has support \([1-K, K],\)
      \[\sup_{s,t:\abs{s-t} \ge 2K-1} \abs{r_X(s, t)} = 0. \qedhere\]
  \end{enumerate}
\end{proof}

We may now bound the variance of \(\fh(j_n).\) We will show that the variance 
process is distributed as the process \(X\) from the above lemma, so can be 
controlled similarly.

\begin{proof}[Proof of \autoref{thm:result}]
  Let \(I_n \coloneqq [0, 2^{j_n}].\) The process
  \[X_n(t) \coloneqq \frac{\fh(j_n) - \fb(j_n)}{c(j_n)}(2^{-j_n}t), \qquad t 
  \in I_n,\]
  is distributed as
  \[\sfb^{-1}2^{-j_n/2} \left( \sum_{k\in\Z} Z_{j_0,k} 
  \varphi_{j_0,k}(2^{-j_n}t) + \sum_{j = j_0 + 1}^{j_n} \sum_{k\in \Z} Z_{j,k} 
  \psi_{j,k}(2^{-j_n}t) \right),\]
  for \(Z_{j,k} \iid N(0, 1),\) so by an orthogonal change of basis, as
  \[\sfb^{-1} 2^{-j_n/2} \sum_{k\in\Z} Z_k \varphi_{j_n,k}(2^{-j_n}t), \qquad 
  Z_k \iid N(0, 1).\]
  In case (i), \(X_n\) is distributed as the process \(X\) from 
  \autoref{lem:wavelet-process}, so we are done.

In case (ii), set \(J_n \coloneqq [2K, 2^{j_n} - 2K],\) and \(K_n \coloneqq 
I_n \setminus J_n.\) On \(J_n,\) \(X_n\) is distributed as the process \(X\) 
from \autoref{lem:wavelet-process}, and we have
  \begin{align*}
    \P\left(\sup_{t \in J_n} \abs{X_n(t)} > u\right)
    &\le \P\left(\sup_{t \in I_n} \abs{X_n(t)} > u\right)\\
    &\le \P\left(\sup_{t \in J_n} \abs{X_n(t)} > u\right) +
    \P\left(\sup_{t \in K_n} \abs{X_n(t)} > u\right),
  \end{align*}
  so for \(u_n(j_n) \coloneqq x(\gamma_n)/a(j_n) + b(j_n),\)
  \begin{multline*}
    \abs*{\P\left(\sup_{t \in I_n} \abs{X_n(t)} > u_n(j_n)\right) - 
    \P\left(\sup_{t \in J_n} \abs{X(t)} > u_n(j_n)\right)}\\
    \begin{aligned}
      &\le \P\left(\sup_{t \in K_n} \abs{X_n(t)} > u_n(j_n)\right)\\
      &\le 8K(1 - \Phi(Cu_n(j_n)))\\
      &\lesssim e^{-C^2u_n(j_n)^2/2}/u_n(j_n),
    \end{aligned}
  \end{multline*}
  with a constant \(C > 0\) depending on \(\varphi.\) This term is 
  \(o(\gamma_n),\) so the result follows by \autoref{lem:wavelet-process}, 
  applied to the process \(X\) on \(J_n.\)
\end{proof}

\section{Source code}
\label{app:source}

The following program implements \autoref{alg:verify} in Mathematica 8 or 
above. Note that we bound \(\upsilon_\varphi\) by bounding the numerator and 
denominator of \eqref{eq:upsilon} separately over \(I.\) By 
\eqref{eq:upsilon-numerator}, the numerator is positive; to bound 
\(\upsilon_\varphi\) inside \((0, \infty),\) we must therefore bound 
\(\sigma_\varphi''\) below zero. To verify \autoref{ass:sigma-maximum}, it is 
thus sufficient that we establish a finite positive value of 
\(\upsilon_\varphi.\)

{\scriptsize \verbatiminput{Cascade.m}}

\bibliographystyle{abbrvnat}
{\footnotesize \bibliography{sbrw}}

\begin{thebibliography}{15}
\providecommand{\natexlab}[1]{#1}
\providecommand{\url}[1]{\texttt{#1}}
\expandafter\ifx\csname urlstyle\endcsname\relax
  \providecommand{\doi}[1]{doi: #1}\else
  \providecommand{\doi}{doi: \begingroup \urlstyle{rm}\Url}\fi

\bibitem[Bickel and Rosenblatt(1973)]{bickel_global_1973}
Bickel  P J and Rosenblatt  M.
\newblock On some global measures of the deviations of density function
  estimates.
\newblock \emph{The Annals of Statistics}, 1:\penalty0 1071{\textendash}1095,
  1973.

\bibitem[Brown and Low(1996)]{brown_asymptotic_1996}
Brown  L D and Low  M G.
\newblock Asymptotic equivalence of nonparametric regression and white noise.
\newblock \emph{The Annals of Statistics}, 24\penalty0 (6):\penalty0
  2384{\textendash}2398, 1996.
\newblock
  \href{http://dx.doi.org/10.1214/aos/1032181159}{doi:10.1214/aos/1032181159}

\bibitem[Bull(2011)]{bull_honest_2011}
Bull  A D.
\newblock Honest adaptive confidence bands and self-similar functions.
\newblock October 2011.
  \href{http://arxiv.org/abs/1110.4985}{{arXiv:1110.4985}}

\bibitem[Chyzak et~al.(2001)Chyzak, Paule, Scherzer, Schoisswohl, and
  Zimmermann]{chyzak_construction_2001}
Chyzak  F, Paule  P, Scherzer  O, Schoisswohl  A, and Zimmermann  B.
\newblock The construction of orthonormal wavelets using symbolic methods and a
  matrix analytical approach for wavelets on the interval.
\newblock \emph{Experimental Mathematics}, 10\penalty0 (1):\penalty0
  67{\textendash}86, 2001.

\bibitem[Cohen et~al.(1993)Cohen, Daubechies, and Vial]{cohen_wavelets_1993}
Cohen  A, Daubechies  I, and Vial  P.
\newblock Wavelets on the interval and fast wavelet transforms.
\newblock \emph{Applied and Computational Harmonic Analysis}, 1\penalty0
  (1):\penalty0 54{\textendash}81, 1993.
\newblock
  \href{http://dx.doi.org/10.1006/acha.1993.1005}{doi:10.1006/acha.1993.1005}

\bibitem[Daubechies(1992)]{daubechies_ten_1992}
Daubechies  I.
\newblock \emph{Ten lectures on wavelets}, volume~61 of \emph{{CBMS-NSF}
  Regional Conference Series in Applied Mathematics}.
\newblock Society for Industrial and Applied Mathematics {(SIAM)},
  Philadelphia, {PA}, 1992.

\bibitem[Gin\'{e} et~al.(2011)Gin\'{e}, G\"{u}nt\"{u}rk, and
  Madych]{gine_periodized_2011}
Gin\'{e}  E, G\"{u}nt\"{u}rk  C S, and Madych  W R.
\newblock On the periodized square of {$L^2$} cardinal splines.
\newblock \emph{Experimental Mathematics}, 20\penalty0 (2):\penalty0 177--188,
  2011.

\bibitem[Gin\'{e} and Nickl(2010)]{gine_confidence_2010}
Gin\'{e}  E and Nickl  R.
\newblock Confidence bands in density estimation.
\newblock \emph{The Annals of Statistics}, 38\penalty0 (2):\penalty0
  1122{\textendash}1170, 2010.
\newblock \href{http://dx.doi.org/10.1214/09-AOS738}{doi:10.1214/09-AOS738}

\bibitem[H\"{a}rdle et~al.(1998)H\"{a}rdle, Kerkyacharian, Picard, and
  Tsybakov]{hardle_wavelets_1998}
H\"{a}rdle  W, Kerkyacharian  G, Picard  D, and Tsybakov  A.
\newblock \emph{Wavelets, approximation, and statistical applications}, volume
  129 of \emph{Lecture Notes in Statistics}.
\newblock {Springer-Verlag}, New York, 1998.

\bibitem[H\"{u}sler(1999)]{husler_extremes_1999}
H\"{u}sler  J.
\newblock Extremes of {G}aussian processes, on results of {P}iterbarg and
  {S}eleznjev.
\newblock \emph{Statistics \& Probability Letters}, 44\penalty0 (3):\penalty0
  251{\textendash}258, 1999.
\newblock
  \href{http://dx.doi.org/10.1016/S0167-7152(99)00016-4}{doi:10.1016/S0167-715%
2(99)00016-4}

\bibitem[H\"{u}sler et~al.(2003)H\"{u}sler, Piterbarg, and
  Seleznjev]{husler_convergence_2003}
H\"{u}sler  J, Piterbarg  V, and Seleznjev  O.
\newblock On convergence of the uniform norms for {G}aussian processes and
  linear approximation problems.
\newblock \emph{The Annals of Applied Probability}, 13\penalty0 (4):\penalty0
  1615{\textendash}1653, 2003.
\newblock
  \href{http://dx.doi.org/10.1214/aoap/1069786514}{doi:10.1214/aoap/1069786514}

\bibitem[Piterbarg and Seleznjev(1994)]{piterbarg_linear_1994}
Piterbarg  V and Seleznjev  O.
\newblock Linear interpolation of random processes and extremes of a sequence
  of {G}aussian nonstationary processes.
\newblock 1994.
\newblock Technical report 446, Department of Statistics, University of North
  Carolina, Chapel Hill, {NC}.

\bibitem[Rioul(1992)]{rioul_simple_1992}
Rioul  O.
\newblock Simple regularity criteria for subdivision schemes.
\newblock \emph{{SIAM} Journal on Mathematical Analysis}, 23\penalty0
  (6):\penalty0 1544{\textendash}1576, 1992.
\newblock \href{http://dx.doi.org/10.1137/0523086}{doi:10.1137/0523086}

\bibitem[Smirnov(1950)]{smirnov_construction_1950}
Smirnov  N V.
\newblock On the construction of confidence regions for the density of
  distribution of random variables.
\newblock \emph{Doklady Akad. Nauk {SSSR} {(N.S.)}}, 74:\penalty0
  189{\textendash}191, 1950.

\bibitem[Tsybakov(2009)]{tsybakov_introduction_2009}
Tsybakov  A B.
\newblock \emph{Introduction to Nonparametric Estimation}.
\newblock Springer Series in Statistics. Springer, New York, 2009.

\end{thebibliography}

\end{document}